\newtheorem{thm}{Theorem}
\newtheorem{cor}[thm]{Corollary}
\newtheorem{prop}[thm]{Proposition}
\newtheorem{rmk}[thm]{Remark}
\def\co{\colon\thinspace}
\newcommand{\mb}[1]{\mathbb{#1}}
\newcommand{\Hom}{\ensuremath{{\rm Hom}}}
\newcommand{\Ind}{\ensuremath{{\rm Ind}}}
\newcommand{\hocolim}{\ensuremath{\mathop{\rm hocolim}}}
\newcommand{\overto}{\mathop\rightarrow}
\newcommand{\into}{\mathop\hookrightarrow}
\newcommand{\Rep}{\ensuremath{\mathop{\rm Rep}}}
\newcommand{\Sum}{\ensuremath{\mathop{\rm Sum}}}
\newcommand{\Map}{\ensuremath{{\rm Map}}}
\newcommand{\GL}{{\rm GL}}
\newcommand{\Uni}{{\rm U}}
\newcommand{\PU}{{\rm PU}}
\newcommand{\Sp}{{\cal S}p}
\newcommand{\Sym}{{\rm Sym}}
\newcommand{\Ad}{{\rm Ad}}
\newcommand{\cB}{{\rm B}}
\newcommand{\cE}{{\rm E}}
\newcommand{\eilm}[1]{\ensuremath{{\rm H} #1}}
\newcommand{\smsh}[1]{\ensuremath{\mathop{\wedge}_{#1}}}
\newcommand{\xym}[1]{
\vskip 0.7pc
\centerline{\xymatrix{#1}}
\vskip 0.7pc
}
\begin{document}
\title{The Bott cofiber sequence in
  deformation $K$-theory and simultaneous similarity in $\Uni(n)$}
\author{Tyler Lawson\footnote{Supported in part by NSF
    grant DMS-0402950.}\\
Department of Mathematics, University of Minnesota\\
email: \texttt{tlawson@math.umn.edu}}

\maketitle

\begin{abstract}
We show that there is a homotopy cofiber sequence of spectra
relating Carlsson's deformation $K$-theory of a group $G$ to its
``deformation representation ring,'' analogous to the Bott
periodicity sequence relating connective $K$-theory to ordinary
homology.  We then apply this to study simultaneous similarity of
unitary matrices.
\end{abstract}

The algebraic $K$-theory of a category uses the machinery of infinite
loop space theory to associate spectra to symmetric monoidal
categories.  The homotopy groups of these spectra give information
about the structure of the category itself.  However, some symmetric
monoidal categories arise with natural topologies on their objects and
morphisms that give information about how objects in the category can
behave in families.

For example, given a group $G$, we can consider the category of its
finite-dimensional complex representations or unitary representations,
each of which comes with a natural topology.  Carlsson's ``deformation
$K$-theory,'' or the associated unitary variant, produces a $K$-theory
spectrum which depends on both the symmetric monoidal structure and
the behavior in families.

The purpose of this article is to identify the cofiber of the Bott map
on unitary deformation $K$-theory (\cite{carlsson}, \cite{tylerprod})
of a finitely generated group $G$.  For a finite group $G$, this
cofiber can be identified with the Eilenberg-MacLane spectrum
associated to the complex representation ring $R[G]$.  More generally
one obtains a ``unitary deformation representation ring,'' also
denoted by $R[G]$, which is a commutative $\mb HZ$-algebra spectrum.
This deformation representation ring was considered in a previous
paper \cite{tylerrep}.  Results of Park and Suh \cite{parksuh} will be
applied to show that this deformation representation ring admits a
cellular construction as an $\eilm{\mb Z}$-module spectrum.

There is a resulting first quadrant Atiyah-Hirzebruch style spectral
sequence converging to the homotopy groups of deformation $K$-theory, as
follows.
\[
E_2^{p,q} = E_3^{p,q} = \pi_p(R[G]) \otimes \pi_q(ku) \Rightarrow
\pi_{p+q} {\cal K}G.
\]

As a side effect of this identification of $R[G]$ with the
cofiber of the Bott map, we obtain results about the homotopy type of
spaces parameterizing representations of the group $G$.  In
particular, when $G$ is free, we obtain information about simultaneous
similarity.

The spectral theorem in linear algebra implies that a unitary matrix
$A$ is determined, up to similarity, by its set of eigenvalues
$\{z_1,\ldots,z_n\}$, counted with multiplicity.  Taking the
eigenvalues of a matrix gives a map from $\Uni(n)$ to the $n$-fold
symmetric product $\Sym^n(S^1)$, inducing a bijection
\[
\Uni(n)^{\Ad}/\Uni(n) \to \Sym^n(S^1).
\]
In fact, both sides have natural topologies that make this map a
homeomorphism.

The {\em simultaneous similarity\/} problem in $\Uni(n)$ is to classify
the orbits of $k$-tuples of matrices $(A_1,\ldots,A_k)$ under unitary
change of basis, or simultaneous conjugation.  There is an analogous
classification in $\GL(n)$ due to Friedland \cite{friedland},
which generalizes the Jordan canonical form but is much more involved.

The simplest invariant that can be extracted from this situation is
the collection of eigenvalues.  This gives a continuous {\em
  eigenvalue map\/}
\[
\phi_{n,k}\co X(n,k) = \left[\Uni(n)^{\Ad}\right]^k/\Uni(n) \to
\left[\Sym^n(S^1)\right]^k.
\]
In addition, there are stabilization maps $X(n,k) \into X(n+1,k)$, given
by
\[
(A_i) \mapsto \left(
\begin{bmatrix}
A_i & 0 \\0  & 1
\end{bmatrix}
\right)
\]
Define $X(\infty,k)$ to be the (homotopy) colimit of the $X(n,k)$.
These stabilization maps commute with the stabilization maps
$\Sym^n(S^1) \into \Sym^{n+1}(S^1)$, given by adding an extra copy of
the basepoint $1$.

We will show that the stable eigenvalue map
\[
\phi_k\co X(\infty,k) \to \left[ \Sym^\infty(S^1)\right]^k
\]
is a homotopy equivalence.  The Dold-Thom theorem already implies that
the map $S^1 \to \Sym^\infty(S^1)$ is an isomorphism on homotopy groups.
Therefore, this result can be rephrased by saying that the map
$(S^1)^k = X(1,k) \to X(\infty,k)$ is a homotopy equivalence.

At the end of this paper we will give two proofs of this result.  The
first proof presented here applies to more general spaces of
representations and makes use of recently developed categories of
module spectra, particularly of smash products over the connective
$K$-theory spectrum.  The author does not know general results about
the stabilization of the homotopy groups of the spaces $X(n,k)$.


In section~\ref{sec:simplicial}, we give an interpretation of the
eigenvalue map in terms of simplicial spaces, using the simplicial
decomposition of $\Uni(n)$ given in Harris \cite{harris}.  We then
establish a geometric proof by establishing contractibility for
various spaces parameterizing multiple hyperplane arrangements in $\mb
C^n$ for large $n$.

The geometric proof amounts to showing that the maps $\phi_k$ are
quasifibrations.  It should be noted that the eigenvalue maps
$\phi_{n,k}$ are not quasifibrations, even for $n=3$, $k \geq 2$.  The
fiber of the eigenvalue map at the basepoint is a single point; an
exercise is to show that the fiber over a point of the form
\[
\left(
\{\zeta_1, \zeta_2, 1,\}, 
\{\zeta_1, \zeta_2, 1,\}, 
\{1,1,1\},
\{1,1,1\},\ldots
\right) \in \left[\Sym^3(S^1)\right]^k,
\]
for $\zeta_1, \zeta_2, 1$ distinct elements of $S^1$, has the homotopy
type of $S^4$.

\section{Definitions}
\label{sec:definitions}

We briefly recall the definition and several properties of deformation
$K$-theory from \cite[Section~4]{tylerprod} and the deformation
representation ring functor from \cite[Section~4]{tylerrep}.

Recall (\cite{segal}, \cite{schwede}) that a $\Gamma$-space $M$ is
a functor from finite based sets to based spaces such that $M(*) = *$.
Associated to a levelwise finite simplicial set $K$, there is an
associated based space $M(K)$ obtained by applying $M$ levelwise and
taking geometric realization.  There is a natural assembly map $K
\smsh{} M(L) \to M(K \smsh{} L)$, and so a $\Gamma$-space gives rise
to a symmetric spectrum
\[
\Sp(M) = \{M(S^n)\}.
\]
If $X$ is a topological abelian monoid, we can define a $\Gamma$-space
associated to $X$ by
\[
X(Z) = F(Z,X),
\]
where $F$ denotes the based mapping space, such that for $\alpha\co Z
\to Z'$, 
\[
\alpha_*(f)(z') = \sum_{\alpha(z) = z'} f(z).
\]

Associated to a (topological) group $G$, we let $\Rep(G)$ be the space
\[
\coprod_{n \in \mb N} \Hom(G, \Uni(n))/\Uni(n),
\]
where the space of homomorphisms has the compact-open topology and
$\Uni(n)$ acts by conjugation.  This space parameterizes isomorphism
classes of unitary representations of $G$.

The operations $\oplus$ and $\otimes$ give rise to the structure of a
commutative topological semiring on $\Rep(G)$.  In particular, the
abelian addition operation $\oplus$ allows us to construct a
spectrum 
\[
R[G] = \Sp\left(\Rep(G)\right).
\]
One can show that $R[G]$ is the spectrum obtained by iterated
application of the the classifying space functor.  This spectrum can
be viewed as a homotopical group completion functor, generalizing the
Grothendieck group construction of the ordinary representation ring.

The natural map
\[
\Rep(G) \to \Omega^\infty R[G]
\]
is a homotopy group completion map.  The operation $\otimes$ gives
rise to the structure of an $E_\infty$-algebra over $\eilm{\mb Z}$ on
$R[G]$.

The construction of $\Rep(G)$ has a $K$-theoretic analogue.  
Let ${\cal U}$ be a fixed countably infinite inner product space over
$\mb C$.  A {\em $G$-plane\/} in ${\cal U}$ is a pair $(V,\rho)$,
where $V$ is a finite dimensional subspace of ${\cal U}$ and $\rho\co
G \to \Uni(V)$ is a group homomorphism.  We define the deformation
$K$-theory of $G$, ${\cal K}G$, to be the $\Gamma$-space given by
\[
{\cal K}G(Z) = \left\{(V_z,\rho_z)_{z \in Z}\ \Big|\ V_z \hbox{ a
    $G$-plane,}\ V_z \perp V_{z'} \hbox{ if } z \neq z',\  V_* = 0\right\}.
\]
For a morphism $\alpha\co Z \to Z'$, we define
\[
\alpha_*((V_z,\rho_z)_{z \in Z}) = \left(\bigoplus_{\alpha(z) = z'} V_z,
\bigoplus_{\alpha(z) = z'} \rho_z\right)_{z' \in Z'}.
\]

Taking isomorphism classes gives a map ${\cal K}G \to
\Rep(G)$ of $\Gamma$-spaces.  Therefore, there is a
natural map of spectra $\Sp({\cal K}G) \to R[G]$.

The justification for the name $K$-theory arises as follows.  We
define the following spaces.
\begin{eqnarray*}
Ob({\cal C}_G) &=& \coprod_{n \in \mb N} \Hom(G, \Uni(n)) \\
Mor({\cal C}_G) &=& \coprod_{n \in \mb N} \Hom(G, \Uni(n)) \times
\Uni(n)
\end{eqnarray*}
View a homomorphism $\rho \in Ob({\cal C}_G)$ as a (unitary)
representation of $G$, and a pair $(\rho, A) \in Mor({\cal C})$ as an
isometry of representations $A\co \rho \to A \rho A^{-1}$.
These form an internal category in spaces; the source, target, unit,
and composition maps are all continuous.  Further, the block
sum $\oplus$ makes this into an internal symmetric monoidal category
(in fact, a permutative category) in spaces.  The spectrum $\Sp({\cal
K}G)$ is homotopy equivalent to the associated $K$-theory object
$K({\cal C}_G)$.

Explicitly, we have a nerve
\[
N({\cal C}_G) \simeq \coprod_{n \in \mb N} \Hom(G,\Uni(n))
\times_{\Uni(n)} \cE\Uni(n).
\]
The permutative category structure makes this into a topological
monoid with an $E_\infty$-H-space structure, and $K({\cal C}_G)$ is the
connective spectrum associated to $N({\cal C}_G)$.  If $G$ is
trivial, the associated spectrum is the connective $K$-theory spectrum.

\section{Filtrations of the representation ring}

We now provide a cellular construction of the topological monoid
$\Rep(G)$ of the previous section.

If $G$ is finitely generated and discrete, the space $\Hom(G,\Uni(n))$
is the set of real points of an algebraic variety, with $\Uni(n)$ acting
algebraically by conjugation.  In particular, by \cite[Theorem
3.7]{parksuh}, it admits the structure of a $\Uni(n)$-CW complex.

For any $N \in \mb N$, let $\Rep(G,N)$ be the submonoid of $\Rep(G)$
generated by the subspace $\coprod_{n \leq N} \Hom(G,\Uni(n))$.  This
gives rise to a sequence of inclusions
\[
* = \Rep(G,0) \subset \Rep(G,1) \subset \Rep(G,2) \subset \cdots
\]

A point of $\Rep(G)$ is an isomorphism class of unitary
representations of $G$.  In particular, any such representation admits
a unique decomposition into irreducible subrepresentations.  Let
$\Sum(G,N) \subset \Hom(G,\Uni(N))$ be the subspace consisting of
those representations which are reducible.

Equivalently, a representation $V \in \Hom(G,\Uni(N))$ is irreducible
if and only if the stabilizer of it under the action of $\Uni(N)$ is
the diagonal subgroup $S^1$, as follows.  If $V \cong V' \oplus V''$, the
stabilizer contains an action of $S^1 \times S^1$ acting individually
on each factor.  Conversely, Schur's lemma shows that the endomorphism
ring of any irreducible object $V$ is a finite dimensional division
algebra over $\mb C$, and hence consists only of scalar maps.  This
shows that the map $\Sum(G,N) \into \Hom(G,\Uni(N))$ must be a
$\Uni(N)$-CW inclusion.

This gives rise to the following diagram of spaces.
\xym{
\Sum(G,N)/\Uni(N) \ar@{^(->}[r] \ar[d] &
\Hom(G,\Uni(N))/\Uni(N) \ar[d] \\
\Rep(G,N-1) \ar[r] &
\Rep(G,N)
}
Applying the free abelian topological monoid functor
$\Sym^\infty((-)_+)$, which is left adjoint to the forgetful functor,
to the top row gives a diagram of abelian topological monoids.
\xym{
\Sym^\infty(\Sum(G,N)/\Uni(N)_+) \ar@{^(->}[r] \ar[d] &
\Sym^\infty(\Hom(G,\Uni(N))/\Uni(N)_+) \ar[d] \\
\Rep(G,N-1) \ar[r] &
\Rep(G,N)
}
The statement that any unitary representation of $G$ is uniquely (up
to isomorphism) a direct sum of irreducible subrepresentations implies
that on the level of underlying abelian monoids, this diagram is a
pushout diagram.

The monoids in the above diagram admit augmentations to the monoid
$\mb N$, and are compact Hausdorff in each fiber.  Therefore, the
above diagram is a pushout diagram of topological abelian monoids.
These pushout diagrams are preserved by cartesian products, and hence
the associated diagram of classifying spaces is a pushout diagram.

We include a proof of the following for completeness.
\begin{prop}
Suppose $A \to B$ is a CW-inclusion and $\Sym^\infty(A_+) \to M$
is a map of topological abelian monoids.  Let $N$ be the pushout of
the diagram
\[
M \leftarrow \Sym^\infty(A_+) \to \Sym^\infty(B_+)
\]
of topological abelian monoids.  Then the map $M \to N$ is a
CW-inclusion, and the sequence of maps
\[
M \to N \to \Sym^\infty(B/A)
\]
induces a homotopy fibration sequence of spectra
\[
\Sp(M) \to \Sp(N) \to \eilm{\mb Z} \smsh{} (B/A).
\]
\end{prop}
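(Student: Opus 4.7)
The plan is to identify the cofiber of $M \to N$ as $\Sym^\infty(B/A)$, verify that $M \to N$ is a CW-inclusion, and then deduce the fibration statement from the $\Gamma$-space construction combined with a Dold-Thom type quasifibration argument.

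For the identification of the cofiber: the quotient $N/M$ of topological abelian monoids fits into the pushout of $* \longoverfrom \Sym^\infty(A_+) \longoverto \Sym^\infty(B_+)$. Since $\Sym^\infty((-)_+)$ is left adjoint to the forgetful functor from topological abelian monoids to based spaces, it preserves pushouts, and this pushout equals $\Sym^\infty(B_+/A_+) = \Sym^\infty(B/A)$. For the CW-inclusion claim, I would first check that $\Sym^\infty(A_+) \to \Sym^\infty(B_+)$ is a CW-inclusion by filtering by symmetric product degree: the pair $(\Sym^n(B_+), \Sym^n(A_+))$ inherits a CW structure from the $\Sigma_n$-equivariant CW structure on $(B_+)^n$. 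Pushouts of topological abelian monoids along a CW-inclusion remain CW-inclusions, which gives the claim for $M \to N$.

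For the fibration sequence of spectra, I would apply the $\Gamma$-space construction $Z \mapsto F(Z,-)$ level-wise and realize on spheres. The essential input is the Dold-Thom style assertion that, for a CW-inclusion of topological abelian monoids, the quotient map $N \to N/M$ is a quasifibration with fiber $M$. Iterating this through the sphere filtration to build the spectrum, and identifying $\Sp(\Sym^\infty(B/A))$ with $\eilm{\mb Z} \smsh{} (B/A)$ via Dold-Thom, gives the stated homotopy fibration of spectra.

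The main obstacle is this third step: promoting the cofibration $M \to N$ of topological abelian monoids to a quasifibration $N \to N/M$. This is the content of a Dold-Thom style argument using the CW-inclusion hypothesis to verify the local triviality condition; once established, the remainder is formal manipulation of the Segal $\Gamma$-space machinery together with the stability of spectra (which makes cofiber sequences into fiber sequences).
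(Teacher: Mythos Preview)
Your argument is sound in outline but takes a genuinely different route from the paper.  The paper does not attempt to verify that $N \to N/M$ is a quasifibration.  Instead it replaces the strict pushout $N$ by the two-sided bar construction $B(M,\Sym^\infty(A_+),\Sym^\infty(B_+))$ and observes that the comparison map to $N$ is a weak equivalence because the pushout is along a cofibration (this is where the CW-inclusion hypothesis is used, via the explicit filtration $N_i$).  Since the classifying space functor commutes with products it commutes with bar constructions, so after passing to spectra one has $\Sp(N) \simeq B(\Sp(M), \eilm{\mb Z}\smsh{}\Sigma^\infty A_+, \eilm{\mb Z}\smsh{}\Sigma^\infty B_+)$ by Dold--Thom; this exhibits a homotopy pushout square of spectra whose row cofibers give $\eilm{\mb Z}\smsh{}(B/A)$ directly.

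Your route---computing the monoid cokernel $N/M \cong \Sym^\infty(B/A)$ via the left adjoint and then invoking a Dold--Thom quasifibration argument---is more classical and closer to the original proof of Dold--Thom itself.  It buys you an explicit identification of $N/M$ at the space level, but the price is the point-set verification of the quasifibration criteria along the CW filtration (and its iteration under $B$), which you correctly flag as the main obstacle.  The paper's bar-construction argument trades that point-set work for the single observation that pushouts along cofibrations are homotopy pushouts, and then does the rest entirely on the spectrum side where cofiber sequences and fiber sequences coincide automatically.
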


\begin{proof}
The pushout $N$ is formed as a sequence of iterated CW attachments
$N_i \to N_{i+1}$, where $N_0 = M$ and
\[
N_{i+1} = N_i \bigcup_{(\cup B^j \times A \times B^{i-j}) / \Sigma_{i+1}}
B^{i+1}/\Sigma_{i+1}.
\]
Weak equivalences are preserved by pushouts along cofibrations, so a
weak equivalence $M' \to M$ of topological monoids induces a homotopy
equivalence of pushouts.  In particular, the natural weak equivalence
\[
B(M,\Sym^\infty(A_+),\Sym^\infty(A_+)) \to M,
\]
using the bar construction with respect to the monoid structure,
induces a weak equivalence of topological abelian monoids
\[
B(M,\Sym^\infty(A_+),\Sym^\infty(B_+)) \to N.
\]
The classifying space functor commutes with products, and hence with
the bar construction.  Upon iterative application, we find that there
is a natural weak equivalence of spectra
\[
B(\Sp(M), \Sp(\Sym^\infty(A_+)), \Sp(\Sym^\infty(B_+))) \to \Sp(N).
\]

The generalized Dold-Thom theorem implies that there is a natural weak
equivalence
\[
\eilm{\mb Z} \smsh{} \Sigma^\infty X_+ \to  \Sp(\Sym^\infty(X_+))
\]
for spaces $X$ of the homotopy type of a CW-complex.  Therefore, there
is a natural weak equivalence
\[
B(\Sp(M), \eilm{\mb Z} \smsh{} \Sigma^\infty A_+, \eilm{\mb Z} \smsh{}
\Sigma^\infty B_+) \to \Sp(N),
\]
where the bar construction is taken with respect to coproduct (wedge)
in spectra.  Equivalently, there is a homotopy pushout diagram of
spectra
\xym{
\eilm{\mb Z} \smsh{} \Sigma^\infty A_+ \ar[r] \ar[d] &
\eilm{\mb Z} \smsh{} \Sigma^\infty B_+ \ar[d] \\
\Sp(M) \ar[r] &
\Sp(N).
}
The result follows by considering the homotopy cofibers of the rows in
this diagram.
\end{proof}

Let $R_N = \Hom(G,\Uni(N)) / \Sum(G,\Uni(N))$; it is a based
$\PU(N)$-CW complex with free action away from the basepoint.

\begin{cor}
\label{cor:fibseq}
The spectrum $R[G]$ is the homotopy colimit of the spectra
\[
\Sp(\Rep(G,N)).
\]
There are fibration sequences of spectra for each $N \geq 1$
\[
\Sp(\Rep(G,N-1)) \to \Sp(\Rep(G,N)) \to \eilm{\mb Z} \smsh{}
(R_N/\PU(N)).
\]
\end{cor}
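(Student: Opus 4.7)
The plan is to deduce both assertions directly from the preceding Proposition, applied to the pushout diagrams of topological abelian monoids established in the discussion just above.

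First I would handle the fibration sequence. Take $M = \Rep(G,N-1)$, $A = \Sum(G,N)/\Uni(N)$, and $B = \Hom(G,\Uni(N))/\Uni(N)$, and let the map $\Sym^\infty(A_+) \to M$ be the one induced by uniquely decomposing a reducible $N$-dimensional representation into strictly smaller-dimensional summands. The text has already identified $\Rep(G,N)$ with the topological abelian monoid pushout of
\[
M \longleftarrow \Sym^\infty(A_+) \longrightarrow \Sym^\infty(B_+).
\]
To invoke the Proposition I need $A \hookrightarrow B$ to be a CW-inclusion; this is precisely the content of the Park--Suh observation noted in the text, since $\Sum(G,N) \hookrightarrow \Hom(G,\Uni(N))$ is a $\Uni(N)$-CW inclusion and the quotient by $\Uni(N)$ is then a CW-inclusion. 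The Proposition delivers the sequence $\Sp(\Rep(G,N-1)) \to \Sp(\Rep(G,N)) \to \eilm{\mb Z} \smsh{} (B/A)$, and it remains to identify $B/A$ with $R_N/\PU(N)$. Because $\Uni(N)$ acts on $\Hom(G,\Uni(N))$ by conjugation, the center $S^1$ acts trivially on the whole space, so the action factors through $\PU(N)$; therefore $B/A = (\Hom(G,\Uni(N))/\Sum(G,N))/\PU(N) = R_N/\PU(N)$.

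For the colimit assertion, I would observe that every finite-dimensional unitary representation of $G$ decomposes as an orthogonal sum of irreducibles, each factoring through some $\Uni(n)$, so $\Rep(G) = \bigcup_N \Rep(G,N)$ as topological abelian monoids. The previous paragraph, applied iteratively, shows that each inclusion $\Rep(G,N-1) \hookrightarrow \Rep(G,N)$ is a CW-inclusion, in particular a closed cofibration, so the sequential colimit computes the homotopy colimit. The $\Gamma$-space construction and the symmetric spectrum assembly both commute with sequential colimits along closed inclusions (mapping out of the compact sets $S^n$ into the compact-Hausdorff fibers of $\Rep(G,N) \to \mb N$), yielding $R[G] = \Sp(\Rep(G)) \simeq \hocolim_N \Sp(\Rep(G,N))$.

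The main obstacle is bookkeeping rather than a deep idea: verifying that the pushout formed in topological abelian monoids really agrees, on underlying spaces, with the one contemplated by the Proposition. This is exactly why the paper emphasized just above that the monoids in question are augmented to $\mb N$ with compact-Hausdorff fibers and why products preserve the pushout at the level of classifying spaces; once that is granted, both parts of the corollary follow formally from the Proposition together with the Park--Suh CW-structure.
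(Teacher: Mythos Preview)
Your proposal is correct and follows essentially the same approach as the paper: both assertions are deduced directly from the preceding Proposition applied to the pushout description of $\Rep(G,N)$, using that the resulting inclusions are CW-inclusions. You simply spell out more details than the paper does, in particular the identification $B/A \cong R_N/\PU(N)$ via the observation that the central $S^1 \subset \Uni(N)$ acts trivially.
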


\begin{proof}
The inclusions $\Rep(G,N-1) \to \Rep(G,N)$ are CW-inclusions and
induce pushout diagrams of spectra.  The space $\Rep(G)$ is therefore
the homotopy colimit of the subspaces $\Rep(G,N)$, and
$\Sp(\Rep(G))$ is the homotopy colimit of its subspectra
$\Sp(\Rep(G,N))$.

The existence of the fibration sequence is immediate from the
proposition.
\end{proof}

\section{Filtrations of unitary deformation $K$-theory}

We briefly recall the following results from \cite{tylerprod}.  From
this point forward we abuse notation by writing ${\cal K}G$ to denote
the spectrum associated to the $\Gamma$-space of
section~\ref{sec:definitions}.

In the previous section we showed that that $\Rep(G)$ has a filtration
by submonoids $\Rep(G,N)$ consisting of representations that are
direct sums of irreducible subrepresentations of dimension $N$ or
smaller.  There is an associated filtration of the symmetric monoidal
category ${\cal C}_G$ by closed subcategories ${\cal C}_{G,N}$, and a
diagram of maps of spectra as follows.
\xym{
K({\cal C}_{G,1}) \ar[r] \ar[d] &
K({\cal C}_{G,2}) \ar[r] \ar[d] & 
\cdots  \ar[r] &
{\cal K}G \ar[d] \\
\Sp(\Rep(G,1)) \ar[r] &
\Sp(\Rep(G,2)) \ar[r] &
\cdots \ar[r] &
R[G].
}

The map $\hocolim K({\cal C}_{G,N}) \to {\cal K}G$ is a weak
equivalence \cite[Proposition~14]{tylerprod}.  Additionally, the
objects $K({\cal C}_{G,N})$ are module spectra over $ku$ for all $N$
\cite[Proposition~30]{tylerprod}.

Recall that $R_N = \Hom(G,\Uni(N)) / \Sum(G,\Uni(N))$.  In
\cite[Section~4]{tylerprod}, for each $N$, a spectrum $ku^{\PU(N)}$
with a continuous action of $\PU(N)$ was constructed, with underlying
spectrum homotopy equivalent to $ku$, so that there is a cofibration
sequence up to homotopy
\[
K({\cal C}_{G,N-1}) \to K({\cal C}_{G,N}) \to
R_N \smsh{\PU(N)} ku^{\PU(N)}.
\]
(\cite[Corollary~19]{tylerprod} and \cite[Corollary~22]{tylerprod}.)
Taking smash products over $ku$ with $\eilm{\mb Z}$ gives a natural
cofibration sequence
\[
\eilm{\mb Z} \smsh{ku} K({\cal C}_{G,N-1}) \to
\eilm{\mb Z} \smsh{ku} K({\cal C}_{G,N}) \to
\eilm{\mb Z} \smsh{} (R_N/\PU(N)).
\]
(\cite[Section~8]{tylerprod}.)  

\begin{thm}
\label{thm:equiv}
The map $K({\cal C}_{G}) \to R[G]$ induces a weak
equivalence
\[
\eilm{\mb Z} \smsh{ku} K({\cal C}_{G}) \to R[G].
\]
\end{thm}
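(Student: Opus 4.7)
The plan is an induction on the filtration degree $N$, proving that the induced map
\[
\eilm{\mb Z} \smsh{ku} K({\cal C}_{G,N}) \to \Sp(\Rep(G,N))
\]
is a weak equivalence for every $N$, and then passing to the homotopy colimit.

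The base case $N=0$ is trivial since both sides are contractible. For the inductive step, I would assemble the two cofiber sequences recalled just before the theorem statement into a commutative ladder
\xym{
\eilm{\mb Z} \smsh{ku} K({\cal C}_{G,N-1}) \ar[r] \ar[d] &
\eilm{\mb Z} \smsh{ku} K({\cal C}_{G,N}) \ar[r] \ar[d] &
\eilm{\mb Z} \smsh{} (R_N/\PU(N)) \ar[d] \\
\Sp(\Rep(G,N-1)) \ar[r] &
\Sp(\Rep(G,N)) \ar[r] &
\eilm{\mb Z} \smsh{} (R_N/\PU(N))
}
of horizontal cofiber sequences induced by the natural transformation $K({\cal C}_{G,\bullet}) \to \Sp(\Rep(G,\bullet))$. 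The left vertical map is a weak equivalence by the inductive hypothesis. The right vertical map needs to be identified with (a weak equivalence homotopic to) the identity: this is the step that records that the two a priori distinct cellular filtrations, one coming from the $K$-theoretic cofiber sequence of \cite{tylerprod} and one coming from Corollary~\ref{cor:fibseq}, have the same associated graded piece, namely the Eilenberg--MacLane spectrum of the quotient $R_N/\PU(N)$. Granted commutativity of the right square, the middle map is then a weak equivalence by the long exact sequence in homotopy and the five lemma.

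Having established the equivalence level by level, I would conclude by taking the homotopy colimit over $N$. On the $K$-theory side, $\hocolim K({\cal C}_{G,N}) \simeq {\cal K}G$ by \cite[Proposition~14]{tylerprod}, and smashing with $\eilm{\mb Z}$ over $ku$ preserves homotopy colimits, so the left-hand side is $\eilm{\mb Z} \smsh{ku} K({\cal C}_G)$. On the representation-ring side, Corollary~\ref{cor:fibseq} gives $\hocolim \Sp(\Rep(G,N)) \simeq R[G]$. The natural map between the two homotopy colimits is then a weak equivalence because it is a homotopy colimit of weak equivalences.

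The principal obstacle is the identification of the right vertical map in the ladder. Morally this is forced: both quotient constructions pick out exactly the irreducible $N$-dimensional representations, smashed with either $ku^{\PU(N)}$ or its $\eilm{\mb Z}$-ification, and smashing $ku^{\PU(N)}$ over $ku$ with $\eilm{\mb Z}$ should produce $\eilm{\mb Z}$ with the induced $\PU(N)$-action, which is equivalent to $\eilm{\mb Z}$ with trivial action. Making this rigorous requires unwinding the construction of $ku^{\PU(N)}$ from \cite{tylerprod} and checking that the comparison map $K({\cal C}_{G,N}) \to \Sp(\Rep(G,N))$ respects the quotient construction on cofibers; once that naturality is pinned down, the remaining homotopical bookkeeping is routine.
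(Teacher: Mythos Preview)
Your proposal is correct and follows exactly the paper's approach: induction on $N$ via the ladder of cofiber sequences, five-lemma reduction to the cofiber terms, and passage to the homotopy colimit. The paper resolves the ``principal obstacle'' you flag by writing down the explicit space-level diagram $\Sum(G,\Uni(N)) \to N({\cal C}_{G,N-1}) \to \Rep(G,N-1)$ over $\Hom(G,\Uni(N)) \to N({\cal C}_{G,N}) \to \Rep(G,N)$, applying $\Sigma^\infty(-)_+$, and checking that the resulting adjoint maps on column-wise cofibers are equivalences---precisely the naturality check you anticipated.
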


\begin{proof}
The $ku$-module structure on $K({\cal C}_{G,N})$ is induced by tensor
product with trivial vector spaces.  It coherently commutes with the
abelian group structure on $\Rep(G,N)$ via the augmentation map
sending a vector space to its dimension.  Therefore, the map $N({\cal
C}_{G,N}) \to \Rep(G,N)$ induces a map of $ku$-modules.

The proof proceeds by proving inductively that the adjoint map of
$\eilm{\mb Z}$-modules
\[
\eilm{\mb Z} \smsh{ku} K({\cal C}_{G,N}) \to \Sp(\Rep(G,N))
\]
is a weak equivalence, and taking homotopy colimits.  By the
five-lemma, it suffices to show that the induced maps of homotopy
cofibers are weak equivalences for all $N$.  By
corollary~\ref{cor:fibseq} and \cite[Section~8]{tylerprod}, these
homotopy cofibers are both weakly equivalent to
\[
\eilm{\mb Z} \smsh{} (R_N/\PU(N)).
\]
Therefore, it suffices to produce a map demonstrating that this map is
a weak equivalence.  There is a natural diagram of maps of spaces
\xym{
\Sum(G,\Uni(N)) \ar[r] \ar[d] &
N({\cal C}_{G,N-1}) \ar[d] \ar[r] &
\Rep(G,N-1) \ar[d] \\
\Hom(G,\Uni(N)) \ar[r] &
N({\cal C}_{G,N}) \ar[r] &
\Rep(G,N).
}
\noindent Suspension is left adjoint to the forgetful functor to
spaces, so there is an induced diagram of maps of symmetric spectra
\xym{
\Sigma^\infty \Sum(G,\Uni(N))_+ \ar[r] \ar[d] &
K({\cal C}_{G,N-1}) \ar[r] \ar[d] &
\Sp(\Rep(G,N-1)) \ar[d] \\
\Sigma^\infty \Hom(G,\Uni(N))_+ \ar[r] &
K({\cal C}_{G,N}) \ar[r] &
\Sp(\Rep(G,N)).\\
}
Taking pushouts in columns gives maps
\[
\Sigma^\infty R_N \to ku^{\PU(N)} \smsh{\PU(N)} R_N \to \eilm{\mb Z}
\smsh{} (R_N/\PU(N))
\]
whose adjoint maps
\[
\eilm{\mb Z} \smsh{\PU(N)} R_N \to \eilm{\mb Z} \smsh{ku} ku^{\PU(N)}
\smsh{\PU(N)} R_N \to \eilm{\mb Z} \smsh{} (R_N/\PU(N))
\]
are equivalences.
\end{proof}

\begin{cor}
\label{cor:ahss}
There is a homotopy cofiber sequence of $ku$-modules
\[
\Sigma^2 {\cal K}G \overto^\beta {\cal K}G \to R[G],
\]
where $\beta$ is multiplication by the Bott element in $\pi_2(ku)$.
There is a corresponding convergent ``Atiyah-Hirzebruch'' spectral
sequence with $E_2$-term
\[
\pi_p(R[G]) \otimes \pi_q(ku) \Rightarrow \pi_{p+q} {\cal K}G.
\]
\end{cor}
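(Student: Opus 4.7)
The plan is to derive both claims from Theorem~\ref{thm:equiv} by smashing the universal Bott cofiber sequence against ${\cal K}G$ in the category of $ku$-module spectra.

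In $ku$-modules there is a defining cofiber sequence
\[
\Sigma^2 ku \overto^\beta ku \to \eilm{\mb Z}
\]
realizing $\eilm{\mb Z}$ as $ku/\beta$. Since smashing over $ku$ with ${\cal K}G$ preserves cofiber sequences, this yields
\[
\Sigma^2 {\cal K}G \overto^\beta {\cal K}G \to \eilm{\mb Z} \smsh{ku} {\cal K}G,
\]
and Theorem~\ref{thm:equiv} identifies the third term with $R[G]$. This proves the first assertion.

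For the spectral sequence, I would iterate $\beta$ to build the tower of $ku$-modules
\[
\cdots \overto^\beta \Sigma^{2n}{\cal K}G \overto^\beta \Sigma^{2(n-1)}{\cal K}G \overto^\beta \cdots \overto^\beta \Sigma^2{\cal K}G \overto^\beta {\cal K}G,
\]
whose successive cofibers are $\Sigma^{2n} R[G]$ by the first part. Applying $\pi_*$ gives an exact couple, and after reindexing so that the $q$-grading records the power of $\beta$, the resulting spectral sequence has $E_2$-page
\[
E_2^{p,q} = \pi_p(R[G]) \otimes \pi_q(ku),
\]
since $\pi_*(ku) = \mb Z[\beta]$ is concentrated in nonnegative even degrees. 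Convergence is automatic from connectivity: ${\cal K}G$ is $(-1)$-connected, so $\Sigma^{2n}{\cal K}G$ is $(2n-1)$-connected, and in any fixed total degree only finitely many filtration layers contribute.

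The real work has already been done in Theorem~\ref{thm:equiv}; the remaining obstacle is essentially bookkeeping, namely choosing a point-set model of $ku$-modules (such as the symmetric spectra used elsewhere in the paper) rigid enough to carry the cofiber sequence, its iterates, and the resulting filtration on the nose, so that the derived statements are unambiguous.
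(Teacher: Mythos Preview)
Your proposal is correct and follows essentially the same route as the paper: smash the defining cofiber sequence $\Sigma^2 ku \overto^\beta ku \to \eilm{\mb Z}$ over $ku$ with ${\cal K}G$, invoke Theorem~\ref{thm:equiv} to identify the cofiber, and then iterate to obtain the Bott tower whose associated graded gives the spectral sequence. Your version adds the explicit connectivity argument for convergence, which the paper leaves implicit.
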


\begin{proof}
This follows by smashing the homotopy cofiber sequence
\[
\Sigma^2 ku \overto^\beta ku \to \eilm{\mb Z}
\]
with the spectrum ${\cal K}G$ over $ku$, and using the theorem to
identify the terms in the result.  The spectral sequence follows by
considering the tower of spectra
\[
\cdots \to \Sigma^4 {\cal K}G \to \Sigma^2 {\cal K}G \to {\cal K}G,
\]
whose filtration quotients are $\Sigma^{2k} R[G]$.
\end{proof}

\section{Example computations}

In this section, we analyze irreducible representations to compute the
deformation ring spectrum $R[G]$, and then apply
Corollary~\ref{cor:ahss} to obtain information about the deformation
$K$-theory groups of several groups.

For further examples relating deformation $K$-theory of surface groups
to gauge theory, the reader should consult \cite{ramrasyangmills}.

\subsection{Finitely generated abelian groups}

Let $G$ be a finitely generated abelian group, with
character group $G^* = \Hom(G,\Uni(1))$.  Any irreducible
representation is uniquely, up to isomorphism, a direct sum of
characters.  The topological monoid $\Rep(G)$ is the infinite
symmetric product $\Sym^\infty(G^*)$, and so $R[G] \simeq \eilm{\mb Z}
\smsh{} G^*.$

In particular, if $G \cong \mb Z^r \oplus A$ where $A$ is finite, then
$G^* \cong (S^1)^r \times A^*$, and so we obtain the following.
\[
\pi_* R[G] \cong \bigoplus_{a \in A^*} H_*(S^1)^{\otimes r} \cong
H_*(S^1)^{\otimes r} \otimes R[A]
\]
In particular, it is free abelian in each degree.  The $E_2$-term of
the spectral sequence for deformation $K$-theory is therefore
\[
H_*(S^1)^{\otimes r} \otimes R[A] \otimes \mb Z[\beta].
\]

It remains to exclude the possibility of differentials in this
spectral sequence.  Either naturality in $G$ or the results of
\cite{tylerprod} imply that the spectral sequence degenerates at the
$E_2$-term.

\subsection{The integer Heisenberg group}

Let $G$ be the integer Heisenberg group of upper triangular integer
matrices with $1$ on the diagonal.  In \cite{tylerrep}, the
deformation representation ring $R[G]$ was shown to satisfy
\[
\pi_* R[G] \cong
\begin{cases}
\oplus\,\mb Z & \text{if }* = 0,\\
\oplus\,\mb Z^2 & \text{if }* = 1,\\
\oplus\,\mb Z & \text{if }* = 2,\\
0 & \text{otherwise}.
\end{cases}
\]
Here the direct sum ranges over roots of unity in $\mb C$; these index
the irreducible representations via the ``central character.''  The
spectral sequence for deformation $K$-theory is therefore forced to
degenerate at $E_2$, with no hidden extensions possible as all groups
involved are free.  Therefore, we find that 
\[
\pi_* {\cal K}(G) \cong
\begin{cases}
\oplus\,\mb Z & \text{if }* = 0,\\
\oplus\,\mb Z^2 & \text{if }* \geq 1.\\
\end{cases}
\]

\subsection{$\mb Z \rtimes \mb Z/2$}
Let $G$ be the semidirect product $\mb Z \rtimes \mb
Z/2$, where $\mb Z/2$ acts by negation on $\mb Z$.  It has an
abelian subgroup $\mb Z$ of index $2$, and hence any irreducible
representation has dimension $1$ or $2$.

The commutator subgroup of $G$ is $2 \mb Z$, and there are four
1-dimensional representations $1, \sigma, \tau, \sigma\tau$.  (Here we
take $\sigma$ to be the nontrivial representation factoring through
the ``obvious'' quotient map $\mb Z \rtimes \mb Z/2 \to \mb
Z/2$.)  Therefore, $R_1 / \PU(1) \cong \vee^4 S^0$.

For any $\alpha \in S^1$, there is a corresponding unitary character
of $\mb Z$ (also denoted by $\alpha$) which sends the generator to
$\alpha$.  The induced representation $V_\alpha = \Ind_{\mb
Z}^G(\alpha)$ is a two-dimensional unitary representation whose
restriction to $\mb Z$ is isomorphic to $\alpha \oplus \alpha^{-1}$.
One readily checks the following facts.
\begin{itemize}
\item $V_\alpha \cong V_\beta$ if and only if $\alpha = \beta^{\pm
    1}$.
\item $V_{\alpha}$ is irreducible if and only if $\alpha \neq \pm 1$.
\item $V_1 \cong 1 \oplus \sigma$ and $V_{-1} \cong \tau \oplus \sigma \tau$.
\item All 2-dimensional representations of $G$ are either reducible or
  isomorphic to $V_\alpha$ for some $\alpha$.
\end{itemize}

As a result, the space $R_2 / \PU(2)$ of 2-dimensional representations
modulo reducibles is homeomorphic to $[0,1]/\partial [0,1] \simeq
S^1$.  The cofiber sequences of corollary~\ref{cor:fibseq} degenerate
to a single cofiber sequence
\[
\eilm{\mb Z} \smsh{} (\vee^4 S^0) \to R[G] \to \eilm{\mb Z} \smsh{}
([0,1] / \partial [0,1]).
\]
Therefore, $\pi_* R[G] = 0$ for $* > 0$, and there is a short exact
sequence as follows.
\xym{
0 \ar[r] &
\mb Z \ar[r] &
\mb Z \oplus \mb Z\sigma \oplus \mb Z\tau \oplus \mb Z \sigma \tau
\ar[r] &
\pi_0 R[G] \ar[r] &
0.
}
\noindent The left-hand map in this sequence is multiplication by $(1 +
\sigma) - (\tau + \sigma \tau)$.

As the homotopy of $R[G]$ is concentrated in degree zero, the spectral
sequence for the deformation $K$-theory degenerates and we find
\[
\pi_* {\cal K}(G) \cong
\begin{cases}
\mb Z^3 & \text{if }*\text{ is even, } * \geq 0,\\
0 & \text{otherwise.}
\end{cases}
\]
(The degeneration of the spectral sequence actually implies that the
homotopy type of the spectrum is $\vee^3 ku$.)

We note that this group is isomorphic to the amalgamated product $\mb
Z/2 \ast \mb Z/2$, and the main theorem of \cite{ramras} recovers this
result as part of a general formula for amalgamated products.

\subsection{$\mb Z^2 \rtimes \mb Z/4$}

We list one final example which is not known by methods of excision or
product formulas.

Suppose $G$ is the semidirect product $\mb Z^2 \rtimes \mb Z/4$, where
the cyclic group of order 4 acts on $\mb Z^2$ by the matrix
\[\begin{bmatrix}0 & -1 \\1 & 0\end{bmatrix}.\]
Choose generators $x$ and $y$ for $\mb Z^2$.  The group $G$ has an
index $4$ abelian subgroup, and so the irreducible representations
have dimensions $1$, $2$, or $4$.

More specifically, let $T = \Hom(\mb Z^2, \Uni(1))$ be the character
group of $\mb Z^2$, with action of $\mb Z/4$ by precomposition.
Elementary Frobenius reciprocity breaks the irreducible
representations of $G$ into the following types.
\begin{itemize}
\item Associated to each character in $T$ fixed by $\mb Z/4$, there
  are $4$ distinct extensions to irreducible $1$-dimensional
  representations.  These are acted on freely transitively by the
  character group of $\mb Z/4$.

There are precisely $2$ characters in $T$ fixed by $\mb Z/4$, given by
the trivial character and the character $x \mapsto -1, y \mapsto -1$.
The group of characters of $G$ is $(\mb Z/2 \times \mb Z/4)^*$.
\item Associated to any $\mb Z/4$-orbit in $T$ of order 2, each
  representative has $2$ distinct extensions to $1$-dimensional
  representations of $\mb Z^2 \rtimes \mb Z/2$.  These induce to 2
  distinct irreducible $2$-dimensional representations of $G$
  determined only by the orbit.  These are interchanged by the
  character group of $\mb Z/4$. 

There is precisely $1$ orbit in $T$ of size 2, with a representative 
given by the character $x \mapsto -1, y \mapsto 1$.  There are then $2$
irreducible representations of degree 2.
\item Associated to each $\mb Z/4$-orbit in $T$ of order 4, any
  representative in the orbit induces to an irreducible $4$-dimensional
  representation of $G$.  This is fixed by the character group of $\mb
  Z/4$.

  The space of isomorphism classes of representations of degree 4,
  modulo reducibles, is therefore the quotient of $T$ by $\mb Z/4$
  (homeomorphic to $S^2$), modulo the 3 points corresponding to orbits
  of size less than $4$.  We can give this space a cell structure with
  two $1$-dimensional cells (attaching the three points reducing to
  the basepoint), together with a $2$-cell attached via a map trivial
  in homology.
\end{itemize}

One can carry out analysis as in the previous example to show that the
boundary map on the generators of the $1$-cells in homology injects to
a direct summand onto the previously attached $0$-cells.  One can then
determine that the deformation representation ring $R[G]$ has homotopy
as follows.
\[
\pi_* R[G] \cong
\begin{cases}
\mb Z^8 & \text{if }* = 0,\\
\mb Z & \text{if }* = 2,\\
0 & \text{otherwise.}
\end{cases}
\]
The spectral sequence for the deformation $K$-theory then degenerates
at the $E_2$-term, and we find
\[
\pi_* {\cal K}(G) \cong
\begin{cases}
\mb Z^8 & \text{if }* = 0,\\
\mb Z^9 & \text{if }*\text{ is even, } * \geq 0,\\
0 & \text{otherwise.}
\end{cases}
\]
(The degeneration of the spectral sequence actually implies that the
homotopy type of the spectrum is $(\vee^8 ku) \vee \Sigma^2 ku$.)

\section{Representation ring spectra of free groups}

Let $F_k$ be the free group on $k$ generators $x_1,\ldots,x_k$ with
the discrete topology.  A unitary representation of $F_k$ consists of
a choice of image of each generator.  Therefore, 
\[
\Rep(F_k) = \coprod_{n \in \mb N} \left(\Uni(n)^\Ad\right)^k/\Uni(n) =
  \coprod_{n \in \mb N} X(n,k).
\]
The direct sum maps $X(n,k) \times X(m,k) \to X(m+n,k)$ respect
stabilization, and therefore give rise to the structure of an
abelian topological monoid on $X(\infty,k)$.  In particular, there
is a map of abelian topological monoids
\[
\Rep(F_k) \to \mb Z \times X(\infty,k).
\]
The spaces $X(n,k)$ are connected and have abelian fundamental group,
so the same holds for $X(\infty,k)$.

Due to classical results of Quillen \cite[Appendix~Q]{quillen}, the
homotopy group completion map $\Rep(G) \to \Omega \cB \Rep(G)$
is characterized as inducing a localization map on homology
\[
H_*(\Rep(G)) \to (\pi_0 \Rep(G))^{-1} H_*(\Rep(G)).
\]
In particular, the map $\Rep(F_k) \to \mb Z \times X(\infty,k)$ is a
homotopy group completion map.  As a result, we find that
\[
\pi_*(R[G]) \cong
\begin{cases}
\mb Z &\hbox{if }* = 0\\
\pi_*(X(\infty,k)) &\hbox{if }* > 0\\
\end{cases}
\]

\begin{rmk}
Ideally, one would like to prove a stability result at this point.
This is not strictly necessary to show that the given map is a group
completion; this is unnecessary for topological monoids which are
homotopy commutative, since these admit a ``calculus of fractions''.
See \cite[Appendix Q]{quillen}.
\end{rmk}

\begin{prop}
As a spectrum,
\[
R[F_k] \simeq \eilm{\mb Z} \vee \left(\vee^k \Sigma
   \eilm{\mb Z}\right).
\]
The group $\pi_1(R[F_k])$ is isomorphic to $\Hom(F_k,\mb Z)$,
naturally in maps of free groups.
\end{prop}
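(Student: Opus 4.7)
The plan is to use the preceding group completion computation together with the result about $X(\infty,k)$ stated in the introduction to identify the homotopy of $R[F_k]$, and then to construct an explicit splitting of $R[F_k]$ as an $\eilm{\mb Z}$-module.

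By the group completion discussion preceding this proposition, $\pi_0 R[F_k] \cong \mb Z$ and $\pi_n R[F_k] \cong \pi_n X(\infty,k)$ for $n > 0$. Combined with the result from the introduction that the inclusion $(S^1)^k = X(1,k) \hookrightarrow X(\infty,k)$ is a weak equivalence, this yields $\pi_1 R[F_k] \cong \mb Z^k$ and $\pi_n R[F_k] = 0$ for $n \geq 2$.

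Next, construct the splitting map
\[
\phi\co \eilm{\mb Z} \vee \bigvee_{i=1}^k \Sigma \eilm{\mb Z} \longrightarrow R[F_k]
\]
as follows. The $\eilm{\mb Z}$ summand maps in via the unit of the $\eilm{\mb Z}$-algebra structure on $R[F_k]$. For the $i$-th summand $\Sigma \eilm{\mb Z}$, use the based map $(S^1, 1) \to \Rep(F_k, 1)$ sending $z$ to the character $\chi_{i,z}\co F_k \to \Uni(1)$ defined by $\chi_{i,z}(x_i) = z$ and $\chi_{i,z}(x_j) = 1$ for $j \neq i$. Composing with $\Rep(F_k) \to \Omega^\infty R[F_k]$ and adjointing yields $\Sigma^\infty S^1 \to R[F_k]$, which the $\eilm{\mb Z}$-module structure promotes to $\Sigma \eilm{\mb Z} \to R[F_k]$.

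On $\pi_1$, the $i$-th standard generator of the source maps to the class of the loop $z \mapsto \chi_{i,z}$ in $X(1,k) \subset X(\infty,k)$, which under the equivalence $(S^1)^k \simeq X(\infty,k)$ corresponds to the $i$-th standard generator of $\pi_1((S^1)^k)$. Hence $\phi$ is a $\pi_*$-isomorphism, and since source and target have free abelian homotopy concentrated in degrees $0$ and $1$ with $\Ext^2_{\mb Z}(\mb Z, \mb Z^k) = 0$, the Postnikov $k$-invariant vanishes and $\phi$ is a weak equivalence of spectra. For naturality, a homomorphism $\alpha\co F_k \to F_l$ induces $R[\alpha]\co R[F_l] \to R[F_k]$ by pullback; identifying $\pi_1 R[F_k] \cong \Hom(F_k, \mb Z)$ by matching the loop $z \mapsto \chi_{i,z}$ with the homomorphism $x_i \mapsto 1$ and $x_j \mapsto 0$ for $j \neq i$, the induced map on $\pi_1$ is precisely the pullback $\Hom(F_l, \mb Z) \to \Hom(F_k, \mb Z)$, $f \mapsto f \circ \alpha$.

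The main obstacle is the reliance on this result from the introduction to obtain $\pi_n R[F_k] = 0$ for $n \geq 2$; a self-contained proof would need to establish the vanishing directly, for example by analyzing $R_N/\PU(N)$ and the fibration sequences of Corollary~\ref{cor:fibseq} for $N \geq 2$.
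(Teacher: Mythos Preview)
Your argument is logically circular in the paper's structure. The weak equivalence $(S^1)^k \simeq X(\infty,k)$ that you invoke is not established in the introduction; it is only announced there, and the \emph{first} proof the paper gives of it is precisely the Corollary that follows this Proposition. In other words, the paper computes $R[F_k]$ first and then reads off the homotopy type of $X(\infty,k)$ from that computation, which is the reverse of what you do. You correctly flag this dependency at the end, and it is fatal as stated. (The paper does supply a second, independent geometric proof of the $X(\infty,k)$ result in Section~\ref{sec:simplicial}; if you reordered the paper and cited that argument instead, your proof would become valid --- but then the Proposition would no longer be doing any real work toward the simultaneous-similarity result, which is one of its main points.)

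The paper's own proof takes an entirely different route that avoids any knowledge of $X(\infty,k)$. It first identifies the deformation $K$-theory spectrum ${\cal K}F_k$ directly: the natural equivalence $G^{\Ad} \times_G EG \simeq \Lambda BG$, fibered $k$ times over $BG$, gives
\[
\coprod_n \left[\Uni(n)^{\Ad}\right]^k \times_{\Uni(n)} \cE\Uni(n) \;\simeq\; \Map\!\left(BF_k,\ \coprod_n \cB\Uni(n)\right),
\]
compatibly with the $E_\infty$ structure. Group-completing the target yields $\Omega^\infty F(\Sigma^\infty (BF_k)_+, ku)$, so ${\cal K}F_k$ is the connective cover of $ku \vee (\vee^k \Omega ku)$, which by Bott periodicity is $ku \vee (\vee^k \Sigma ku)$. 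Then Theorem~\ref{thm:equiv} gives $R[F_k] \simeq \eilm{\mb Z} \smsh{ku} {\cal K}F_k \simeq \eilm{\mb Z} \vee (\vee^k \Sigma \eilm{\mb Z})$. The naturality of $\pi_1$ comes from the identification with $\pi_{-1} F((BF_k)_+, ku) \cong \tilde K^1(BF_k) \cong \Hom(F_k,\mb Z)$.

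A minor remark: once you have shown $\phi$ is a $\pi_*$-isomorphism of spectra, you are done by Whitehead; the sentence about Postnikov $k$-invariants and $\Ext^2$ is unnecessary.
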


\begin{proof}
The deformation $K$-theory spectrum of  $F_k$ is the spectrum
associated to the $E_\infty$-$H$-space
\[
\coprod_{n \in \mb N} \left[\Uni(n)^\Ad\right]^k \times_{\Uni(n)}
\cE\Uni(n).
\]
We briefly sketch an identification of the homotopy type of this
spectrum; a more general decomposition of the homotopy type for free
products can be found in \cite{ramras}.

Recall that for a group $G$, there is a natural weak equivalence
\[
G^\Ad \times_G EG \simeq \Lambda BG,
\]
the free loop space on $BG$ \cite[Proposition~2.6]{bhm}.  Taking the
$k$-fold fiber product over $BG$, we find that there is a natural weak
equivalence
\[
\left[G^\Ad\right]^k \times_G EG \simeq \Map(BF_k, BG).
\]
Naturality implies that the weak equivalence
\[
\coprod_{n \in \mb N} \left[\Uni(n)^\Ad\right]^k \times_{\Uni(n)}
\cE\Uni(n) \simeq \Map\left(BF_k, \coprod_{n
  \in \mb N} \cB\Uni(n)\right)
\]
respects the $E_\infty$-structure, where the $E_\infty$-structure on
the right-hand space is derived from the range of the mapping space.

The spectrum associated to $\coprod \cB\Uni(n)$ is the connective
$K$-theory spectrum $ku$.  Therefore, there are maps
\[
\Map\left(BF_k, \coprod \cB\Uni(n)\right) \to \Map\left(BF_k,
\Omega^\infty ku\right) \simeq \Omega^\infty F\left(\Sigma^\infty
(BF_k)_+, ku\right).
\]
This composite map is a homotopy group completion map by inspection.
Therefore, the map
\[
{\cal K}F_k \to F\left(\Sigma^\infty(BF_k)_+, ku\right)
\]
is an isomorphism on homotopy groups in positive degrees, and the
left-hand spectrum is connective.  The function spectrum is
equivalent to
\[
ku \vee \left(\bigvee^k \Omega ku\right)
\]
as a $ku$-module.  The connective cover of this spectrum is
\[
{\cal K}F_k \simeq ku \vee \left(\bigvee^k \Sigma ku\right)
\]
by Bott periodicity.

By theorem~\ref{thm:equiv}, we then find that
\[
R[F_k] \simeq \eilm{\mb Z} \vee \left(\bigvee^k \Sigma \eilm{\mb Z}\right).
\]
The first homotopy group of $R[F_k]$ has a natural isomorphism to the
$(-1)$'st homotopy group of $F((BF_k)_+,ku)$ by Bott
periodicity, which gives rise to the natural isomorphism
\[
\pi_1 R[F_k] \cong \Hom(F_k, \mb Z).
\]
\end{proof}

\begin{cor}
The eigenvalue map $X(\infty,k) \to \left[\Sym^\infty(S^1)\right]^k$
is a weak equivalence; in particular, the higher homotopy groups of
$X(\infty,k)$ vanish.
\end{cor}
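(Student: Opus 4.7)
My plan is to identify both source and target as Eilenberg-MacLane spaces $K(\mb Z^k, 1)$ and then use the naturality clause of the preceding proposition to check that the eigenvalue map is the identity on $\pi_1$.

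First I would compute the homotopy groups on each side. On the source, $X(\infty, k)$ is connected since each $X(n,k)$ is, and the group completion $\Rep(F_k) \to \mb Z \times X(\infty, k)$ recalled above gives $\pi_*(X(\infty,k)) \cong \pi_*(R[F_k])$ for $* > 0$. The proposition then yields $\pi_1 X(\infty,k) \cong \mb Z^k$ and $\pi_i X(\infty,k) = 0$ for $i \geq 2$, so $X(\infty,k)$ is a $K(\mb Z^k, 1)$. On the target, the Dold-Thom theorem gives $\Sym^\infty(S^1) \simeq K(\mb Z, 1)$, hence $[\Sym^\infty(S^1)]^k$ is also a $K(\mb Z^k, 1)$. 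It therefore suffices to show that the eigenvalue map induces an isomorphism on $\pi_1$.

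Next I would identify the map on $\pi_1$ using naturality in the free group. For $k = 1$, the spectral theorem gives $X(\infty,1) = \Sym^\infty(S^1)$ with the eigenvalue map the identity. For general $k$, let $f_i \co F_1 \to F_k$ send the generator to $x_i$. Under the natural isomorphism $\pi_1 R[F_k] \cong \Hom(F_k, \mb Z)$ of the proposition, the induced map $f_i^*$ becomes the $i$th coordinate projection $\mb Z^k \to \mb Z$, and the $k$ projections jointly equal the identity of $\mb Z^k$. Geometrically, the same $f_i$ induces a map $X(\infty, k) \to X(\infty, 1) = \Sym^\infty(S^1)$ which sends $(A_1, \dots, A_k)$ to the eigenvalues of $A_i$, and this is precisely the $i$th component of the eigenvalue map. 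Combining the $k$ components identifies the map on $\pi_1$ with the identity of $\mb Z^k$, so Whitehead's theorem completes the proof.

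The step demanding the most care is this last identification: verifying that the abstract natural isomorphism $\pi_1 R[F_k] \cong \Hom(F_k, \mb Z)$ supplied by the proposition, once pulled back through the group completion, really does intertwine the algebraic restriction maps $f_i^*$ with the geometric eigenvalue projections. It is a matter of tracking naturality through several constructions, but it is the crux of the argument.
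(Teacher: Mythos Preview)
Your proposal is correct and follows essentially the same route as the paper: identify $X(\infty,k)$ with the identity component of $\Omega^\infty R[F_k]$, use the proposition to see both source and target are $K(\mb Z^k,1)$'s, and then recognize the eigenvalue map as the product of the restriction maps $X(\infty,k)\to X(\infty,1)^k$ induced by the inclusions $F_1\to F_k$, which is an isomorphism on $\pi_1$. Your write-up is more explicit (invoking Dold--Thom and Whitehead and spelling out the naturality check), but the argument is the same.
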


\begin{proof}
This follows from identification of the identity component of
$\Omega^\infty R[F_k]$ with $X(\infty,k)$, and the eigenvalue map with
the product of the restriction maps
\[
X(\infty,k) \to X(\infty,1)^k,
\]
which is an isomorphism on $\pi_1$.
\end{proof}

\section{Simplicial interpretation of the eigenvalue map}
\label{sec:simplicial}

In \cite{harris}, the spectral theorem was be reinterpreted as a
simplicial decomposition of the conjugation action of $\Uni(n)$ on
itself.  We will now recall this construction.

For $\{n_i\}_{i=1}^p$ a sequence of integers with $\sum n_i \leq n$,
define 
\[
{\rm Gr}(n_1,n_2,\cdots;n)= 
\Uni(n)\Big/ \left[\Uni(n_1)\times\Uni(n_2) \times \cdots \times \Uni(n -
\Sigma n_i)\right].
\]
This space is a Grassmannian parameterizing configurations
$(V_1,V_2,\cdots)$ of orthogonal systems of subspaces in $\mb C^n$
with ${\rm dim}(V_i) = n_i$.  It has a natural left action of $\Uni(n)$.

Define a simplicial space $X_.$ by
\[
X_p = \coprod_{\sum_{i=1}^p n_i \leq n} {\rm Gr}(n_1,n_2,\cdots;n).
\]
Face maps are given as follows:
\[
d_i(V_1,\cdots,V_p) = 
\begin{cases}
(V_2,\cdots,V_p) & \hbox{if }i = 0,\\
(V_1,\cdots,V_i + V_{i+1}, \cdots,V_p) & \hbox{if }0 < i < p,\\
(V_1,\cdots,V_{p-1}) & \hbox{if }i = p.\\
\end{cases}
\]
The degeneracy maps are insertion of a zero-dimensional subspace.

A point of the geometric realization $|X_.|$ consists of an
arrangement $(V_1,\cdots,V_p)$ of orthogonal hyperplanes and a point
of $\Delta^p$, i.e. a sequence of numbers $0 \leq t_1 \leq \cdots \leq
t_p \leq 1$.  Define a map $f\co |X_.| \to \Uni(n)$ by sending this
point to the matrix $A$ such that each space $V_i$ is an eigenspace
for $A$ with eigenvalue $e^{2\pi i t_i}$, and the orthogonal
complement of $\oplus V_i$ is acted on trivially by $A$.  The map $f$
is a homeomorphism of $\Uni(n)$-spaces.

There is a map of simplicial spaces
\[
X_. = \left\{\coprod_{\sum n_i \leq n} {\rm Gr}(n_1,\cdots;n)\right\} \to
\left\{\coprod_{\sum n_i \leq n} *\right\}.
\]
The right-hand space is obtained from $X_.$ by taking the quotient by
the action of $\Uni(n)$.  The right-hand simplicial set is
$\Sym^n(S^1)$.

By taking $k$-fold products, we recover the map of spaces
\[
\Uni(n)^k \cong \left|X_.^k\right| \to
\left[\Sym^n(S^1)\right]^k.
\]

By taking quotients by the conjugation action, we find that the
map $X(n,k) \to \left[\Sym^n(S^1)\right]^k$ can be
expressed as the geometric realization of the following map of
simplicial spaces.
\[
\left\{\coprod_{n_{i,j}} 
\left[\Uni(n) \Big\backslash \prod_{i=1}^k {\rm
    Gr}(n_{i,1},n_{i,2},\cdots;n)\right]
\right\} \to \left\{\coprod_{n_{i,j}} *\right\}
\]

Therefore, one way to get estimates on the connectivity of the
eigenvalue map would be to obtain increasing bounds on the
connectivity of the spaces on the left.

Note that the space
\[
\Uni(n) \Big\backslash \prod_{i=1}^k {\rm Gr}(n_{i,1},n_{i,2},\cdots;n)
\]
becomes fixed for $n \geq N = \sum_{i,j} n_{i,j}$; this follows because
any configuration of hyperplanes of this type is contained within its
span, which is of dimension less than or equal to $N$.  

\begin{prop}
The space 
\[
Y_n = \Uni(n) \Big\backslash \prod_{i=1}^k {\rm Gr}(n_{i,1},n_{i,2},\cdots;n)
\]
is contractible for large $n$.
\end{prop}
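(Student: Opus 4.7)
The plan is to build an explicit deformation retraction of $Y_n$ onto a preferred point, for $n$ sufficiently large relative to $N = \sum_{i,j} n_{i,j}$. I would first choose as basepoint $y_0 \in Y_n$ the $\Uni(n)$-equivalence class of the ``fully orthogonal'' configuration, in which each $V_{i,j}$ is spanned by a prescribed block of coordinate axes and distinct blocks (indexed over all pairs $(i,j)$) are mutually orthogonal. This class is well-defined once $n \geq N$, because $\Uni(n)$ acts transitively on tuples of mutually orthogonal subspaces with prescribed dimensions.

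For a general class in $Y_n$ represented by $(F^{(1)}, \ldots, F^{(k)})$, the idea would be to construct a path from it to $[y_0]$ by exploiting the extra dimensions in $\mb C^n$ (of dimension at least $n - N$) to rotate each flag $F^{(i)}$ into a designated orthogonal slot. Once $n \geq kN$ there is room for $k$ disjoint slots, each of size $\sum_j n_{i,j}$; the rotations could be built by interpolating along geodesics in the relevant Stiefel manifolds. Assembled together, these paths would give a continuous self-homotopy of $Y_n$ realizing the claimed contraction.

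The principal obstacle is making these rotations well-defined on the quotient $Y_n$, equivalently, choosing them $\Uni(n)$-equivariantly on the pre-quotient $\prod_i {\rm Gr}(n_{i,1}, \ldots; n)$ before descent to $Y_n$. Here there is no canonical choice of rotation at the level of a single configuration, so a more global argument seems required. My favored strategy would be to pass to the stable analogue
\[
Y_\infty = \Uni(\infty) \Big\backslash \prod_{i=1}^k {\rm Gr}(n_{i,1}, n_{i,2}, \ldots; \infty),
\]
establish its contractibility by a swindle-type argument based on an identification $\mb C^\infty \cong \mb C^\infty \oplus \mb C^\infty$ that exhibits the identity of $Y_\infty$ as homotopic to a constant map, and then deduce contractibility of $Y_n$ for large $n$ by a stabilization statement showing that $Y_n \to Y_\infty$ is a weak equivalence in a range depending on $N$. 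A more direct alternative would be to stratify $Y_n$ according to the dimensions of the various intersections among the $V_{i,j}$, construct the deformation stratum-by-stratum, and patch together using partitions of unity subordinate to this stratification.
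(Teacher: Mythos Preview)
Your geometric intuition---rotate each flag into its own orthogonal slot using the extra room in $\mb C^n$---is exactly the idea the paper uses, and your ``swindle'' plan would work. But you have talked yourself out of the direct argument by overestimating the well-definedness obstacle, and you have missed a simplification coming from the stabilization remark just before the proposition.

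First, on well-definedness: the rotations need not depend on the configuration at all. Write $\mb C^{kn} \cong V^{\oplus k}$ with $V = \mb C^n$, let $s\co Y_n \to Y_{kn}$ be the stabilization into the first summand, and let $A_i(\theta) \in \Uni(kn)$ be the fixed block rotation carrying the first copy of $V$ to the $i$th copy. The homotopy
\[
\bigl(H_1,\ldots,H_k,\theta\bigr) \longmapsto \bigl(A_1(\theta)\,s(H_1),\ldots,A_k(\theta)\,s(H_k)\bigr)
\]
descends to the quotient because the diagonal embedding $g \mapsto g^{\oplus k}$ of $\Uni(n)$ into $\Uni(kn)$ commutes with every $A_i(\theta)$ and satisfies $g^{\oplus k}\, s(H_i) = s(gH_i)$. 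So the ``principal obstacle'' you flag dissolves once you stop trying to tailor the rotation to the configuration. At $\theta = \pi/4$ the image is the class of $(H_1 \oplus 0,\ 0 \oplus H_2 \oplus 0,\ \ldots,\ 0 \oplus H_k)$, which is a single point of $Y_{kn}$.

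Second, on avoiding $Y_\infty$: the paper already records that $Y_n$ is independent of $n$ once $n \geq N = \sum_{i,j} n_{i,j}$. Hence for $n \geq N$ the stabilization $s\co Y_n \to Y_{kn}$ is a homeomorphism, and the homotopy above exhibits the identity of $Y_n$ as null-homotopic. There is no need to pass to $Y_\infty$ and then argue back. Your stratification alternative is unnecessary for the same reason.
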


\begin{proof}
This sequence of spaces stabilizes for large $n$, so it suffices to
show that the stabilization map
\[
s\co Y_n \to Y_{kn}
\]
is null-homotopic.

Write $\mb C^{kn} \cong V^{\oplus k}$, where $V = C^n$.  For $2 \leq i
\leq k$ and $0 \leq \theta \leq \pi/4$, let $A_i(\theta)$ be the block
element of $\Uni(kn)$
\[
\begin{bmatrix}
\cos \theta I & 0 & & \sin \theta I & \\
0 & I & \cdots & 0 & \cdots \\
& \vdots & & \vdots & \\
-\sin \theta I & 0 & \cdots & \cos \theta I & \cdots \\
& \vdots & & \vdots & \\
\end{bmatrix},
\]
which rotates the first copy of $V$ to the $i$th copy, leaving the
other copies fixed.  (For simplicity, we define $A_1(\theta)$ to be
the identity.)

One then checks that we have a well-defined homotopy
\[
(H_1,H_2,\ldots,H_k,\theta) \mapsto (A_1(\theta) s(H_1), A_2(\theta)
s(H_2), \ldots, A_k(\theta s(H_k)))
\]
from the stabilization map $s$ to the map
\[
(H_1, H_2, \ldots, H_k) \mapsto (H_1 \oplus 0, 0 \oplus H_2 \oplus 0,
\ldots, 0 \oplus H_k).
\]
However, the right-hand side is constant after the quotient by the
action of $\Uni(kn)$.
\end{proof}

We find that the stable eigenvalue map of the introduction is a
homotopy equivalence from this proposition and the simplicial
decomposition of the stable eigenvalue map.

Stability questions naturally give rise to the following question: How
does the connectivity of these spaces of hyperplane arrangements
depend on the $n_{i,j}$ and $n$?

One can obtain some partial answers to this question.  For example,
\[
\Uni(n) \Big\backslash \left[{\rm Gr}(n_1;n) \times {\rm Gr}(n_2;n)\right]
\]
is always contractible.  Given an $n_1$-dimensional plane $V$ and an
$n_2$-dimensional plane $W$ in $\mb C^n$, let $p$ be the orthogonal
projection from $W$ to $V$ and $q = p^T$ the projection from $V$ to
$W$.  The singular value decomposition in linear algebra shows that
this configuration is determined up to isomorphism by the eigenvalues
$\sigma_1^2 \geq \sigma_2^2 \geq \ldots$ of $pq$, which agree with
those of $qp$ up to additional zeros.  (This method was indicated to
us by Neil Strickland.)

\nocite{*}
\bibliography{ahss}

\end{document}